\documentclass[reqno]{amsart}

\usepackage[utf8]{inputenc}
\usepackage[T1]{fontenc}
\usepackage{lmodern}
\usepackage[top=3cm, bottom=3cm, left=3cm, right=3cm]{geometry}
\usepackage{amsmath, amssymb, amsthm, mathtools}
\usepackage{hyperref}
\usepackage{xcolor}
\usepackage{cancel}
\usepackage[normalem]{ulem}
\usepackage{enumitem}
\usepackage{fancyhdr}
\usepackage{booktabs}

\newtheorem{theorem}{Theorem}[section]
\newtheorem{definition}[theorem]{Definition}
\newtheorem{corollary}[theorem]{Corollary}
\newtheorem{proposition}[theorem]{Proposition}
\newtheorem{lemma}[theorem]{Lemma}

\newtheorem{remark}[theorem]{Remark}

\newcommand{\ii}{\mathfrak{i}}

\newcommand{\g}{\mathfrak{g}}

\newcommand{\fn}{\mathfrak n}

\DeclareMathOperator{\Lie}{Lie}
\DeclareMathOperator{\Aut}{Aut}

\DeclareMathOperator{\GL}{GL}

\DeclareMathOperator{\Der}{Der}

\DeclareMathOperator{\nrad}{nilrad}

\newcommand{\tcdot}{\mathchoice
  {\raise-0.20ex\hbox{$\displaystyle\tilde{\cdot}$}}
  {\raise-0.20ex\hbox{$\textstyle\tilde{\cdot}$}}
  {\raise-0.15ex\hbox{$\scriptstyle\tilde{\cdot}$}}
  {\raise-0.10ex\hbox{$\scriptscriptstyle\tilde{\cdot}$}}}
\newcommand{\tcirc}{\mathchoice
  {\raise-0.20ex\hbox{$\displaystyle\tilde{\circ}$}}
  {\raise-0.20ex\hbox{$\textstyle\tilde{\circ}$}}
  {\raise-0.15ex\hbox{$\scriptstyle\tilde{\circ}$}}
  {\raise-0.10ex\hbox{$\scriptscriptstyle\tilde{\circ}$}}}

\newcommand{\id}{\mathrm{id}}

\begin{document}

\title[Simply connected simple Lie skew braces]{On Simply Connected Simple Lie Skew Braces with Nilpotent Multiplicative Group}

\author{Marco Damele}
\address{(Marco Damele) Dipartimento di Matematica \\
         Universit\`a di Cagliari (Italy)}
\email{m.damele4@studenti.unica.it}

\author{Andrea Loi}
\address{(Andrea Loi) Dipartimento di Matematica \\
         Universit\`a di Cagliari (Italy)}
\email{loi@unica.it}

\thanks{
The authors are supported by INdAM and GNSAGA - Gruppo Nazionale per le Strutture Algebriche, Geometriche e le loro Applicazioni; the research was also supported by ProBiki of Fondazione di Sardegna.}

\subjclass[2000]{16T25, 17B05, 17B30, 22E60, 22E46}
\keywords{Lie skew brace; post-Lie algebra; affine action; simple Lie skew brace; nilpotent Lie group}

\begin{abstract}
We prove that a simply connected simple Lie skew brace with nilpotent
multiplicative Lie group must be one-dimensional and abelian.
Equivalently, if $(G,\cdot,\circ)$ is a simply connected Lie skew brace with
nilpotent multiplicative Lie group and
$\dim G>1$,
then $(G,\cdot,\circ)$ is not simple.
Thus, in the simply connected Lie setting, nilpotency of the multiplicative
group is incompatible with simplicity in every dimension greater than one.
 The proof is carried out at the post-Lie algebra level.
First, if the additive Lie algebra is solvable, then its nilradical is
automatically an ideal of the associated post-Lie algebra. Second, when both
Lie algebras underlying an integrable post-Lie structure are nilpotent, one
always obtains a proper post-Lie ideal with trivial quotient. To pass from
infinitesimal ideals to global ideals of the Lie skew brace, we show that
trivial post-Lie quotients give rise to homomorphisms onto abelian trivial Lie
skew braces, whose kernels yield connected closed ideals.
\end{abstract}

\maketitle

\section{Introduction}\label{sec:intro}

Skew braces, introduced by Guarnieri and Vendramin
\cite{Guarnieri2017}, provide an algebraic framework for the study of
set-theoretic solutions of the Yang--Baxter equation. Their Lie-theoretic
counterpart is formed by \emph{Lie skew braces}, namely smooth manifolds
endowed with two compatible Lie group structures. Connected Lie skew braces
admit a natural interpretation in terms of simply transitive affine actions on
Lie groups and, infinitesimally, give rise to post-Lie algebra structures;
see \cite{DameleLoi2026,BaiGuoShengTang2023,Burde2012affineaction}. Thus, Lie
skew braces provide a global setting for the study of integrable post-Lie
algebras.

A natural problem is to understand how restrictive simplicity is in this
context. In the finite theory, simple skew braces may exhibit a rather flexible
behaviour: they need not arise from simple groups, and phenomena with no direct
analogue in the Lie setting may occur; see, for instance,
\cite{Bachiller2018,Byott2024simple}. By contrast, the available results for
Lie skew braces point towards a substantially more rigid picture.

In our recent work \cite{DameleLoiCompactSimple}, we studied compact connected
simple Lie skew braces. We proved that, apart from the one-dimensional trivial
brace on $S^1$, simplicity forces both underlying Lie groups to be simple and
the brace to be trivial or almost trivial. The purpose of the present paper is
to establish a complementary rigidity result under a different structural
assumption, namely nilpotency of the multiplicative Lie group.

Throughout the paper, an ideal of a connected Lie skew brace means a connected
closed Lie subgroup which is normal for both group structures and invariant
under the associated $\lambda$-action. A Lie skew brace is called simple if it
has no nontrivial proper ideals in this sense.

Our main result is the following.

\begin{theorem}\label{thm:main}
Let $(G,\cdot,\circ)$ be a simply connected simple Lie skew brace. Assume that
the multiplicative Lie group $(G,\circ)$ is nilpotent. Then both underlying Lie
groups are one-dimensional and abelian. In particular,
\[
(G,\cdot,\circ)\cong (\mathbb R,+,+).
\]
\end{theorem}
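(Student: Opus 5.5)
We argue at the level of the associated post-Lie algebra, following the strategy in the abstract. Let $\g=\Lie(G,\cdot)$ be the additive Lie algebra with bracket $[\,,]$, and let $x\rhd y$ be the post-Lie product. Then $\{x,y\}=x\rhd y-y\rhd x+[x,y]$ is the Lie algebra $\mathfrak h=\Lie(G,\circ)$. By hypothesis $\mathfrak h$ is nilpotent. Since $(G,\circ)$ is simply connected and nilpotent, it is exponential. Since $G$ is simply connected, the Lie skew brace is recovered from the integrable post-Lie structure.

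\textbf{Step 1: $\g$ is solvable.} The map $\mathfrak h\to\Der(\g)$, $x\mapsto L_x=x\rhd\cdot$, is a Lie algebra homomorphism. The identity
\[
\{x,y\}=[x,y]+L_xy-L_yx
\]
shows that $(\g,[\,,])$ carries a simply transitive affine-type action of the nilpotent group $(G,\circ)$ on the simply connected group $(G,\cdot)$. I would use the known result for post-Lie structures that if $\mathfrak h$ is solvable, then $\g$ cannot have a Levi factor. The quick route is the trace/Lie-theoretic argument: a semisimple Levi factor $\mathfrak s$ of $\g$ would force $\mathfrak h$ to contain a non-solvable part. If this is not available as an earlier result, I would restrict to the simply transitive action of the solvable group $(G,\circ)$ on $G/\mathrm{Rad}$. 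I expect this step to be the main obstacle.

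\textbf{Step 2: produce a proper ideal.} With $\g$ solvable, $\fn=\nrad(\g)$ is nonzero, and it is a post-Lie ideal because derivations preserve the nilradical and $[\g,\fn]\subseteq\fn$. If $\fn\neq\g$, we integrate it. Otherwise $\g$ is nilpotent, and we are in the case where both Lie algebras are nilpotent. Here I would take
\[
\ii=[\g,\g]+\g\rhd\g .
\]
The operators $L_x$ form a nilpotent family that acts nilpotently on $\g$; this follows from the integrability together with nilpotency of both algebras. Hence $\ii\neq\g$ by an Engel-type argument. The subspace $\ii$ is an ideal for $[\,,]$, for $\{\,,\}$, and for $\rhd$, and the quotient $\g/\ii$ is a trivial abelian post-Lie algebra.

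\textbf{Step 3: globalize.} A trivial abelian quotient $\g\to\g/\ii\cong\mathbb R^k$ is simultaneously a Lie homomorphism for $[\,,]$ and for $\{\,,\}$, and it intertwines $\rhd$ with zero. Because $G$ is simply connected, it integrates to smooth homomorphisms $(G,\cdot)\to\mathbb R^k$ and $(G,\circ)\to\mathbb R^k$. These coincide because they have the same differential and agree along the flows relating the two exponentials. The result is a brace homomorphism onto the trivial brace $(\mathbb R^k,+,+)$. Its kernel is closed and normal in both groups and $\lambda$-invariant. It is connected because $G$ and $\mathbb R^k$ are simply connected, via the long exact sequence in homotopy. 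So the kernel is an ideal.

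In the nonnilpotent case of Step 2, $\g/\fn$ is abelian, and one can instead use $\ii\supseteq\fn$ from the same construction. Simplicity then forces the kernel to be trivial, so $G\cong\mathbb R^k$ as a trivial abelian brace. In a trivial abelian brace every line is an ideal, so $k=1$ and $(G,\cdot,\circ)\cong(\mathbb R,+,+)$.
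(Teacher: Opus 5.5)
The overall architecture is the paper's: solvable additive algebra, then the nilradical, then triangular form in the nilpotent--nilpotent case, then integration of a trivial quotient. Your finish is also fine and slightly shorter than the paper's. Taking $\ii=[\g,\g]+\g\triangleright\g$, simplicity forces a trivial kernel, so $(G,\cdot,\circ)\cong(\mathbb R^k,+,+)$, and since every line is an ideal, $k=1$; this bypasses the one-dimensional lemma. But the proposal has genuine gaps.

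\textbf{Step 1 rests on a false statement.} A solvable $\mathfrak h$ does \emph{not} exclude a Levi factor. Take $(G,\cdot)=\widetilde{SL_2(\mathbb R)}=\widetilde K AN$ (Iwasawa, with $\widetilde K\cong\mathbb R$). The solvable group $\widetilde K\times AN$ acts simply transitively by $x\mapsto kxb^{-1}=(kb^{-1})\cdot(bxb^{-1})$, i.e.\ by left translations composed with automorphisms. Transporting its law gives a simply connected Lie skew brace with solvable, non-nilpotent multiplicative group and simple additive group. Your fallback through $G/\mathrm{Rad}$ also uses only solvability, so the same example defeats it. Nilpotency of $(G,\circ)$ is essential here. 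The implication ``$(G,\circ)$ nilpotent $\Rightarrow$ $(G,\cdot)$ solvable'' is a genuine theorem, which the paper imports as a black box from the earlier Damele--Loi structural result; you must invoke it in that form.

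Similarly, nilpotency of every $L_x$ in the nilpotent--nilpotent case is not automatic. It is what the Burde--Dekimpe--Deschamps triangularization cited in the paper provides. Granting it, your Engel argument on $\mathrm{ad}(\g)+\{L_x\}$ does give $\ii\neq\g$, since $\mathrm{ad}(\g)$ is an ideal there because $[L_x,\mathrm{ad}\,y]=\mathrm{ad}(L_xy)$.

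\textbf{Step 2 uses too weak a fact about derivations.} ``Derivations preserve the nilradical'' only gives $\g\triangleright\fn\subseteq\fn$. A post-Lie ideal also needs $\fn\triangleright\g\subseteq\fn$, which is what yields normality in $(G,\circ)$. Your trivial-quotient route in the solvable non-nilpotent case needs more: $\g\triangleright\g\subseteq\fn$, so that $[\g,\g]+\g\triangleright\g\neq\g$. Your Engel argument is not available there. The missing idea is Jacobson's theorem: every derivation of a finite-dimensional solvable Lie algebra maps it into its nilradical. This gives $[\g,\g]+\g\triangleright\g\subseteq\fn\subsetneq\g$; the relevant inclusion is $\ii\subseteq\fn$, not $\ii\supseteq\fn$.

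\textbf{Step 3 does not justify that the two homomorphisms coincide.} Equal differentials at $e$ do not by themselves identify a $\cdot$-homomorphism with a $\circ$-homomorphism, and ``agree along the flows'' is not an argument. What works is the following. The map $g\mapsto(\lambda_g)_{*e}$ is a representation of the connected group $(G,\circ)$ with differential $x\mapsto L_x$. These operators preserve $\ii$ and induce $0$ on $\g/\ii$. Hence each $(\lambda_g)_{*e}$ induces the identity on $\g/\ii$. Then $\varphi\circ\lambda_g$ and $\varphi$ are homomorphisms on the connected group $(G,\cdot)$ with the same differential, so they are equal. Finally $\varphi(g\circ h)=\varphi(g)+\varphi(\lambda_g(h))=\varphi(g)+\varphi(h)$.
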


Equivalently, every simply connected Lie skew brace of dimension greater than
one whose multiplicative Lie group is nilpotent admits a nontrivial proper
connected closed ideal. Hence, in the simply connected Lie setting, nilpotency
of the multiplicative group is incompatible with simplicity in every dimension
greater than one.

The proof combines a global integration argument with two infinitesimal
constructions for post-Lie algebras. Let
$(\g,[\, ,\,]_{\cdot},[\, ,\,]_{\circ},\triangleright)$
be the post-Lie algebra associated with a connected Lie skew brace. The first
ingredient concerns the case in which the additive Lie algebra
$(\g,[\, ,\,]_{\cdot})$ is solvable. We prove that its nilradical is
automatically an ideal of the post-Lie algebra. Moreover, the corresponding
quotient post-Lie algebra is trivial. In the simply connected group setting,
such a trivial quotient integrates to a homomorphism onto an abelian trivial
Lie skew brace, and the identity component of its kernel is a connected closed
ideal of the original Lie skew brace.

The second ingredient treats the case in which both Lie algebras underlying an
integrable post-Lie structure are nilpotent. Using the triangular form arising
from simply transitive NIL-affine actions, we construct a proper post-Lie ideal
of codimension one with trivial quotient whenever
$\dim\g>1$.
This produces a nontrivial proper connected closed ideal at the group level.

To prove Theorem~\ref{thm:main}, we use the structural results of
\cite{DameleLoi2026}, according to which nilpotency of $(G,\circ)$ implies
solvability of $(G,\cdot)$. If the Lie skew brace is simple, the nilradical
argument then forces $(G,\cdot)$ to be nilpotent as well. Thus both Lie
algebras of the associated post-Lie structure are nilpotent. The
nilpotent--nilpotent argument yields a proper ideal unless the dimension is
one, and the one-dimensional case is necessarily trivial.

The simply connected hypothesis is essential in the passage from infinitesimal
quotients to closed ideals at the group level. Indeed, in the non-simply
connected case, a trivial post-Lie quotient may fail to integrate to a
homomorphism onto a Lie group quotient because of a period obstruction; see
Remark~\ref{rem:non-simply-connected-obstruction}.

The paper is organized as follows. In
Section~\ref{sec:preliminaries}, we recall the necessary material on Lie skew
braces and post-Lie algebras, and we show that trivial post-Lie quotients
integrate to connected closed ideals in the simply connected setting. In
Section~\ref{sec:auxiliary}, we establish the two infinitesimal ideal
constructions used in the proof. Finally, in
Section~\ref{sec:proof-main}, we prove Theorem~\ref{thm:main}.

\section{Preliminaries on Lie skew braces and post-Lie algebras}\label{sec:preliminaries}

We begin by recalling the notions that will be needed throughout the paper.

\subsection{Lie skew braces}

\begin{definition}
A \emph{Lie skew brace} $(G, \cdot, \circ)$ consists of  a smooth manifold $G$ endowed with two Lie group
structures, denoted by $(G,\cdot)$ and $(G,\circ)$ such that
\[
g\circ (h\cdot k)=(g\circ h)\cdot g^{-1}\cdot (g\circ k)
\qquad \forall\,g,h,k\in G,
\]
where the inverse and the products on the right-hand side are taken with
respect to the law $\cdot$. 
\end{definition}

\begin{definition}
A Lie skew brace $(G, \cdot, \circ)$ is \emph{trivial} if $(G,\cdot)=(G,\circ)$.
\end{definition}

Associated with a Lie skew brace is the map
\[
\lambda:(G,\circ)\longrightarrow \Aut(G,\cdot),
\qquad
\lambda_g(h):=g^{-1}\cdot (g\circ h),
\]
which is a smooth group homomorphism; see
\cite[Lemma~2.4]{DameleLoi2026}. The brace identity may equivalently be written
as
\[
g\circ h=g\cdot \lambda_g(h)
\qquad \forall\, g,h\in G.
\]

\begin{definition}\label{def:ideal-LSB}
Let $(G,\cdot,\circ)$ be a connected Lie skew brace. A connected Lie subgroup
$I\leq G$
is called an \emph{ideal} of $(G,\cdot,\circ)$ if
\begin{enumerate}[label=\textup{(\roman*)}]
    \item $I$ is closed in $G$;
    \item $I\trianglelefteq (G,\cdot)$;
    \item $I\trianglelefteq (G,\circ)$;
    \item $\lambda_g(I)=I$ for every $g\in G$.
\end{enumerate}
The Lie skew brace is called \emph{simple} if its only ideals are $\{e\}$ and
$G$.
\end{definition}

\subsection{Post-Lie algebras}

Let $(G,\cdot,\circ)$ be a connected Lie skew brace and set
\[
\g:=T_eG.
\]
Then the two Lie group structures induce two Lie algebra structures on $\g$,
namely
\[
\Lie(G,\cdot)=(\g,[\, ,\,]_{\cdot}),
\qquad
\Lie(G,\circ)=(\g,[\, ,\,]_{\circ}).
\]
Differentiating the lambda-action at the identity yields a bilinear product
\[
\triangleright:\g\times \g\to \g,
\qquad
x\triangleright y:=\lambda_{*e}(x)(y),
\]
and one obtains a post-Lie algebra structure
$(\g,[\, ,\,]_{\cdot},[\, ,\,]_{\circ},\triangleright)$;
see \cite{BaiGuoShengTang2023,Burde2012affineaction}.

\begin{definition}\label{def:post-Lie}
A \emph{post-Lie algebra} is a quadruple
$(\g,[\, ,\,]_{\cdot},[\, ,\,]_{\circ},\triangleright)$,
where $(\g,[\, ,\,]_{\cdot})$ and $(\g,[\, ,\,]_{\circ})$ are Lie algebras on
the same finite-dimensional real vector space $\g$, and
\[
\triangleright:\g\times \g\to \g
\]
is a bilinear map such that, for all $x,y,z\in \g$,
\begin{align*}
[x,y]_{\circ}-[x,y]_{\cdot}&=x\triangleright y-y\triangleright x,\\
x\triangleright [y,z]_{\cdot}&=[x\triangleright y,z]_{\cdot}
+[y,x\triangleright z]_{\cdot},\\
[x,y]_{\circ}\triangleright z&=
x\triangleright (y\triangleright z)-y\triangleright (x\triangleright z).
\end{align*}
\end{definition}

For $x\in \g$, we write
\[
L_x(y):=x\triangleright y.
\]
The second post-Lie identity says precisely that each $L_x$ is a derivation of
$(\g,[\, ,\,]_{\cdot})$.

\begin{definition}
A \emph{post-Lie algebra} 
$(\g,[\, ,\,]_{\cdot},[\, ,\,]_{\circ},\triangleright)$ is \emph{trivial}
if $[\, ,\,]_{\cdot}=[\, ,\,]_{\circ}=\triangleright=0$.
\end{definition}

\begin{definition}\label{def:ideal-postlie}
Let
$(\g,[\, ,\,]_{\cdot},[\, ,\,]_{\circ},\triangleright)$
be a post-Lie algebra. A vector subspace $\ii\subseteq \g$ is an
\emph{ideal} of the post-Lie algebra if
\[
[\g,\ii]_{\cdot}\subseteq \ii,\qquad
\g\triangleright \ii\subseteq \ii,\qquad
\ii\triangleright \g\subseteq \ii.
\]
The post-Lie algebra is called \emph{simple} if its only ideals are $0$ and
$\g$.
\end{definition}

\begin{remark}\label{rem:circ-consequence}
The bracket $[\, ,\,]_{\circ}$ does not explicitly appear in
Definition~\ref{def:ideal-postlie}. However, if $\ii\subseteq \g$ satisfies
\[
[\g,\ii]_{\cdot}\subseteq \ii,\qquad
\g\triangleright \ii\subseteq \ii,\qquad
\ii\triangleright \g\subseteq \ii,
\]
then automatically
\[
[\g,\ii]_{\circ}\subseteq \ii.
\]
Indeed, for $x\in \g$ and $a\in \ii$,
\[
[x,a]_{\circ}=[x,a]_{\cdot}+x\triangleright a-a\triangleright x,
\]
and every term on the right-hand side lies in $\ii$.
\end{remark}

For any Lie group $H$, we denote by $H_e$ the identity component of $H$.

\subsection{Trivial post-Lie quotients and closed ideals}

The following result is the basic device that allows us to pass from
infinitesimal quotients to connected closed ideals at the group level.

\begin{proposition}\label{prop:trivial-quotient-closed-ideal}
Let $(G,\cdot,\circ)$ be a simply connected Lie skew brace, and let
$(\g,[\, ,\,]_{\cdot},[\, ,\,]_{\circ},\triangleright)$
be its associated post-Lie algebra. Let $\ii\subseteq \g$ be an ideal of the
post-Lie algebra, and assume that the quotient post-Lie algebra $\g/\ii$ is
trivial. Then there exist a connected simply connected abelian Lie group $A$
and a Lie skew brace homomorphism
\[
\varphi:(G,\cdot,\circ)\to (A,+,+)
\]
such that
\[
\ker(\varphi_{*e})=\ii.
\]
Moreover, if
\[
K:=\ker\varphi
\qquad\text{and}\qquad
I:=K_e,
\]
then $I$ is a connected closed ideal of $(G,\cdot,\circ)$ and
$\Lie(I)=\ii$.
In particular, if $0\neq \ii\neq \g$, then $(G,\cdot,\circ)$ is not simple.
\end{proposition}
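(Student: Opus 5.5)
Set $A:=\g/\ii$, viewed as a vector space and hence as a connected, simply connected abelian Lie group under $+$. Let $\pi:\g\to A$ be the linear projection. The plan is to integrate $\pi$ to a map of groups.

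\textbf{Step 1: $\pi$ is a Lie algebra homomorphism for both brackets.} Triviality of the quotient post-Lie algebra means
\[
[\g,\g]_{\cdot}\subseteq\ii,\qquad [\g,\g]_{\circ}\subseteq\ii,\qquad \g\triangleright\g\subseteq\ii .
\]
So $\pi$ is a Lie algebra homomorphism from both $(\g,[\, ,\,]_{\cdot})$ and $(\g,[\, ,\,]_{\circ})$ to the abelian Lie algebra $A$. Since $(G,\cdot)$ and $(G,\circ)$ are simply connected (the underlying manifold is simply connected), Lie's second theorem gives unique Lie group homomorphisms
\[
\varphi_{\cdot}:(G,\cdot)\to(A,+),\qquad \varphi_{\circ}:(G,\circ)\to(A,+),
\]
with differential $\pi$ at $e$.

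\textbf{Step 2: $\varphi_{\cdot}=\varphi_{\circ}$.} This is the main point, and I would prove it via the $\lambda$-action. For $g\in G$, the map $\varphi_{\cdot}\circ\lambda_g$ is again a homomorphism $(G,\cdot)\to(A,+)$. Its differential is $\pi\circ(\lambda_g)_{*e}$.

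First I claim $\pi\circ(\lambda_g)_{*e}=\pi$ for all $g$. The map $g\mapsto(\lambda_g)_{*e}$ is a smooth homomorphism $(G,\circ)\to\GL(\g)$ whose differential is $x\mapsto L_x$. Since $L_x(\g)\subseteq\ii$ and $\ii$ is $L$-invariant, each $L_x$ induces zero on $\g/\ii$. Hence the induced representation of the connected group $(G,\circ)$ on $\g/\ii$ is trivial, which proves the claim.

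By uniqueness in Lie's theorem, $\varphi_{\cdot}\circ\lambda_g=\varphi_{\cdot}$. Therefore
\[
\varphi_{\cdot}(g\circ h)=\varphi_{\cdot}(g\cdot\lambda_g(h))=\varphi_{\cdot}(g)+\varphi_{\cdot}(h).
\]
So $\varphi_{\cdot}$ is also a homomorphism for $\circ$, with differential $\pi$. By uniqueness again, $\varphi_{\cdot}=\varphi_{\circ}=:\varphi$. The map $\varphi$ is then a Lie skew brace homomorphism onto the trivial brace $(A,+,+)$ (its differential is surjective, so its image is open, hence all of $A$), and $\ker\varphi_{*e}=\ii$.

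\textbf{Step 3: the kernel.} $K=\ker\varphi$ is a closed subgroup, normal in both $(G,\cdot)$ and $(G,\circ)$. It also satisfies $\lambda_g(K)=K$, because $\varphi\circ\lambda_g=\varphi$ and $\lambda_g$ is bijective. Being closed, $K$ is an embedded Lie subgroup with $\Lie(K)=\ker\varphi_{*e}=\ii$.

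Its identity component $I=K_e$ is closed, and it is characteristic in $K$ with respect to any continuous automorphism. Each of the following preserves $K$ and is a homeomorphism:
\begin{itemize}[label=\textup{--}]
\item $\cdot$-conjugation,
\item $\circ$-conjugation,
\item $\lambda_g$.
\end{itemize}
Each therefore maps $I$ onto itself. So $I$ is an ideal in the sense of Definition~\ref{def:ideal-LSB}, with $\Lie(I)=\ii$.

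If $0\neq\ii\neq\g$, then $0<\dim I<\dim G$. Since $G$ is connected, $I$ is a nontrivial proper ideal, and $G$ is not simple.
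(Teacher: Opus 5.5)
Your proposal is correct and follows essentially the same route as the paper: integrate the projection $\g\to\g/\ii$ on $(G,\cdot)$, show that the induced action of the connected group $(G,\circ)$ on $\g/\ii$ is trivial, conclude $\varphi\circ\lambda_g=\varphi$, and then get $\varphi(g\circ h)=\varphi(g)+\varphi(h)$ from $g\circ h=g\cdot\lambda_g(h)$. Integrating $\varphi_{\circ}$ separately is unnecessary, while your explicit argument that $K_e$ is preserved by both conjugations and by every $\lambda_g$ fills in a step the paper only asserts.
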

\begin{proof}
Set
\[
\mathfrak a:=\g/\ii,
\]
and let
\[
q:\g\to\mathfrak a
\]
be the quotient map. Since the quotient post-Lie algebra is trivial,
$\mathfrak a$ is an abelian Lie algebra and the induced post-Lie product
vanishes.

Since $(G,\cdot)$ is simply connected, the Lie algebra homomorphism
\[
q:(\g,[\, ,\,]_{\cdot})\to(\mathfrak a,0)
\]
integrates uniquely to a Lie group homomorphism
\[
\varphi:(G,\cdot)\to A:=(\mathfrak a,+).
\]
By construction,
\[
\varphi_{*e}=q,
\qquad
\ker(\varphi_{*e})=\ii.
\]

It remains to show that $\varphi$ is also a homomorphism with respect to the
$\circ$-structure. Since $\ii$ is a post-Lie ideal, each $\lambda_{g*e}$
preserves $\ii$, and therefore induces a linear endomorphism
\[
\rho(g):\mathfrak a\to\mathfrak a,
\qquad
\rho(g)(q(x)):=q(\lambda_{g*e}(x)).
\]
Thus we obtain a smooth representation
\[
\rho:(G,\circ)\to \GL(\mathfrak a).
\]
Its differential at the identity is given by
\[
\rho_{*e}(x)(q(y))=q(x\triangleright y)=0,
\qquad \forall\,x,y\in\g,
\]
because the induced post-Lie product on $\mathfrak a$ is trivial. Since
$(G,\circ)$ is connected, it follows that $\rho$ is trivial. Therefore
\[
q\circ \lambda_{g*e}=q
\qquad \forall\,g\in G.
\]

Now the Lie group homomorphisms
\[
\varphi\circ\lambda_g,\qquad \varphi
\]
from $(G,\cdot)$ to $A$ have the same differential at the identity. Since
$(G,\cdot)$ is simply connected, they coincide. Hence
\[
\varphi\circ\lambda_g=\varphi
\qquad \forall\,g\in G.
\]

Using
\[
g\circ h=g\cdot\lambda_g(h),
\]
we obtain
\[
\varphi(g\circ h)
=
\varphi(g\cdot\lambda_g(h))
=
\varphi(g)+\varphi(\lambda_g(h))
=
\varphi(g)+\varphi(h).
\]
Thus
\[
\varphi:(G,\cdot,\circ)\to(A,+,+)
\]
is a Lie skew brace homomorphism.

Set
\[
K:=\ker\varphi.
\]
Since $\varphi$ is a Lie group homomorphism for both structures, $K$ is a
closed normal subgroup of both $(G,\cdot)$ and $(G,\circ)$.

We now prove that $K$ is $\lambda$-stable. Let $g\in G$ and $k\in K$. Then
\[
\varphi(g\circ k)=\varphi(g)+\varphi(k)=\varphi(g).
\]
On the other hand,
\[
g\circ k=g\cdot\lambda_g(k),
\]
and therefore
\[
\varphi(g\circ k)
=
\varphi(g)+\varphi(\lambda_g(k)).
\]
Hence
\[
\varphi(\lambda_g(k))=0,
\]
so $\lambda_g(k)\in K$. Thus
\[
\lambda_g(K)\subseteq K
\qquad \forall\,g\in G.
\]
Applying the same argument to $g^{-1}$ gives equality. Hence $K$ is
$\lambda$-stable.

Therefore $K$ is a closed ideal of $(G,\cdot,\circ)$, and so is its identity
component
\[
I:=K_e.
\]
Finally, since $K$ is a closed Lie subgroup,
\[
\Lie(K)=\ker(\varphi_{*e})=\ii.
\]
Since $I=K_e$, we also have
\[
\Lie(I)=\Lie(K)=\ii.
\]
If $0\neq \ii\neq \g$, then $I$ is nontrivial and proper. This completes the
proof.
\end{proof}
\begin{remark}\label{rem:non-simply-connected-obstruction}\rm
The simply connected assumption in Proposition~\ref{prop:trivial-quotient-closed-ideal}
is used only to integrate the quotient map
\[
q:\g\to \g/\ii
\]
to a globally defined homomorphism from $(G,\cdot)$ to the abelian Lie group
$(\g/\ii,+)$.

Without this assumption, one has to pass to the universal cover
\[
p:\widetilde G\to (G,\cdot).
\]
The homomorphism
\[
q\circ p_{*e}:\Lie(\widetilde G)\to \g/\ii
\]
integrates to a homomorphism
\[
\widetilde\varphi:\widetilde G\to \g/\ii.
\]
It descends to $G$ only after quotienting by the period subgroup
\[
\Pi:=\widetilde\varphi(\ker p)\subseteq \g/\ii.
\]
Thus one would like to set
\[
A:=(\g/\ii)/\Pi.
\]
However, this construction yields a Lie group only if $\Pi$ is closed,
equivalently discrete. This condition is not automatic: a finitely generated
subgroup of a real vector space need not be discrete. For instance,
\[
\mathbb Z+\sqrt2\,\mathbb Z\subset \mathbb R
\]
is finitely generated but dense.

Therefore, in the non-simply connected case, a post-Lie ideal with trivial
quotient need not integrate to a connected closed ideal of the Lie skew brace.
The simply connected hypothesis removes this period obstruction, since then
$\ker p=0$ and hence $\Pi=0$.
\end{remark}

\subsection{One-dimensional Lie skew braces}

We record the following simple observation, which will be used at the end of the
proof of the main theorem.

\begin{lemma}\label{lem:1dim-trivial}
Every connected one-dimensional Lie skew brace is trivial.
\end{lemma}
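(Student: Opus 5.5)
The plan is to work at the post-Lie algebra level first and then pass back to the group. Let $(G,\cdot,\circ)$ be a connected one-dimensional Lie skew brace, with post-Lie algebra $(\g,[\, ,\,]_{\cdot},[\, ,\,]_{\circ},\triangleright)$ and $\dim\g=1$. Every one-dimensional Lie algebra is abelian, so $[\, ,\,]_{\cdot}=[\, ,\,]_{\circ}=0$. It remains to control $\triangleright$. Fix a basis vector $x$ of $\g$ and write $x\triangleright x=c\,x$ for some $c\in\mathbb R$. The third post-Lie identity with $y=x$ gives
\[
0=[x,x]_{\circ}\triangleright z = x\triangleright(x\triangleright z)-x\triangleright(x\triangleright z),
\]
which is vacuous. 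The derivation identity is also vacuous, since the bracket is zero. So the infinitesimal data alone do not force $c=0$, and I will have to argue globally.

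At the group level, $\lambda:(G,\circ)\to\Aut(G,\cdot)$ is a continuous homomorphism. The group $(G,\cdot)$ is a connected one-dimensional Lie group, hence isomorphic either to $\mathbb R$ or to $S^1$.

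\textbf{Case $S^1$.} Here $\Aut(S^1)=\{\pm\id\}$ is discrete. Since $(G,\circ)$ is connected, $\lambda$ is trivial, so $g\circ h=g\cdot\lambda_g(h)=g\cdot h$ and the brace is trivial.

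\textbf{Case $\mathbb R$.} Identify $(G,\cdot)$ with $(\mathbb R,+)$. Then $\lambda_g(h)=a(g)h$, where $a:(G,\circ)\to\mathbb R^{\times}_{>0}$ is a continuous homomorphism, so that
\[
g\circ h=g+a(g)h .
\]
This is the main obstacle: $a$ need not a priori be trivial, since the $ax+b$-type formula could define a group law. I would check the associativity and commutativity that $(G,\circ)$ must have. Being one-dimensional and connected, $(G,\circ)$ is abelian, so $g\circ h=h\circ g$, i.e.
\[
g+a(g)h=h+a(h)g \qquad \forall\, g,h.
\]
Setting $h=1$ gives $g(1-a(1))=1-a(g)$, so $a(g)=1-\kappa g$ with $\kappa:=1-a(1)$. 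Positivity of $a(g)$ for all $g\in\mathbb R$ forces $\kappa=0$. Hence $a\equiv 1$, $\circ=+$, and the brace is trivial. This also shows $c=0$ infinitesimally.

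The delicate points to verify are the positivity claim and that $\lambda_g$ is linear in this case. Positivity holds because $a$ is continuous, never zero (each $\lambda_g$ is an automorphism), and $a(e)=1$ on the connected group $(G,\circ)$. Linearity holds because continuous automorphisms of $(\mathbb R,+)$ are multiplications by nonzero scalars.
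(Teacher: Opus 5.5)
Your proof is correct and follows essentially the same route as the paper. The $S^1$ case uses discreteness of $\Aut(S^1)$. In the $\mathbb R$ case you write $\lambda_g(h)=a(g)h$, use commutativity of $(G,\circ)$ to force $a(g)=1-\kappa g$, and use positivity of $a$ to force $\kappa=0$; you get the linear form by setting $h=1$, where the paper divides by $xy$. Your opening post-Lie computation adds nothing to the argument, though your remark that the infinitesimal identities leave $c$ unconstrained correctly explains why a global argument is needed.
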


\begin{proof}
Let $(G,\cdot,\circ)$ be a connected one-dimensional Lie skew brace. By
definition, there exists a continuous homomorphism
\[
\lambda:(G,\circ)\to \Aut(G,\cdot)
\]
such that
\[
g\circ h = g\cdot \lambda_g(h)
\qquad \forall\, g,h\in G.
\]

Since every connected one-dimensional Lie group is abelian, both
$(G,\cdot)$ and $(G,\circ)$ are abelian.

We distinguish two cases according to the underlying manifold $G$.

\smallskip

\noindent
\emph{Case 1: $G$ is diffeomorphic to $S^1$.}
Then $(G,\cdot)\cong S^1$, and hence
\[
\Aut(G,\cdot)\cong \Aut(S^1)\cong \{\pm 1\},
\]
which is discrete. Since $(G,\circ)$ is connected and $\lambda$ is continuous,
the image of $\lambda$ is connected, hence reduced to a point. As
$\lambda_e=\id_G$, it follows that $\lambda_g=\id_G$ for all $g\in G$.
Therefore
\[
g\circ h=g\cdot h
\qquad \forall\, g,h\in G.
\]

\smallskip

\noindent
\emph{Case 2: $G$ is diffeomorphic to $\mathbb R$.}
Then $(G,\cdot)\cong (\mathbb R,+)$. After identifying $(G,\cdot)$ with
$(\mathbb R,+)$, every automorphism of $(G,\cdot)$ is of the form
\[
y\longmapsto a\,y
\qquad (a\in \mathbb R^\times).
\]
Thus there exists a continuous map
\[
a:G\to \mathbb R^\times
\]
such that
\[
\lambda_x(y)=a(x)y
\qquad \forall\,x,y\in G.
\]
Since $(G,\circ)$ is connected and $\lambda_e=\id$ it follows that 
\[
a(x)>0,\ 
\qquad \forall\,x\in G.
\]

The brace identity becomes
\[
x\circ y = x + a(x)y, \ 
\qquad \forall\,x,y\in \mathbb R.
\]
But $(G,\circ)$ is also a connected one-dimensional Lie group, hence abelian.
Therefore
\[
x+a(x)y = y+a(y)x,\ 
\forall\,x,y\in \mathbb R.
\]
Rearranging, we obtain
\[
x\bigl(1-a(y)\bigr)=y\bigl(1-a(x)\bigr), \ \forall\,x,y\in \mathbb R.
\]
For $x,y\neq 0$, this implies
\[
\frac{1-a(x)}{x}=\frac{1-a(y)}{y}.
\]
Hence there exists a constant $c\in\mathbb R$ such that
\[
a(x)=1-cx, \ \forall\,x\in\mathbb R.
\]
Since $a(x)>0$ for all $x\in\mathbb R$, necessarily $c=0$. Thus $a(x)=1$ for
all $x$, so $\lambda_x=\id$ for every $x\in G$. Therefore again
\[
x\circ y=x+y
\qquad \forall\,x,y\in G.
\]

In both cases, $\lambda$ is trivial, and hence the two group laws coincide.
Therefore the Lie skew brace is trivial.
\end{proof}
\section{Auxiliary post-Lie ideal constructions}\label{sec:auxiliary}

In this section we establish the two main infinitesimal ingredients used in the
proof of Theorem~\ref{thm:main}. The first applies when the additive Lie
algebra is solvable and shows that its nilradical is automatically compatible
with the post-Lie structure. The second concerns the case where both Lie
algebras are nilpotent and yields a proper post-Lie ideal with trivial
quotient. Together with Proposition~\ref{prop:trivial-quotient-closed-ideal},
these results will allow us to produce connected closed ideals at the level of
Lie skew braces.

\subsection{The solvable additive case}\label{sec:solvable-additive}

We begin with the solvable additive case. The key point is that, for a
finite-dimensional real post-Lie algebra whose additive Lie algebra is
solvable, the nilradical is automatically stable under all post-Lie operators.
This provides a natural source of post-Lie ideals.

\begin{proposition}\label{prop:nilrad-postlie}
Let
$(\g,[\, ,\,]_{\cdot},[\, ,\,]_{\circ},\triangleright)$
be a finite-dimensional real post-Lie algebra. Assume that the additive Lie
algebra
$(\g,[\, ,\,]_{\cdot})$
is solvable, and let
$\fn:=\nrad(\g,[\, ,\,]_{\cdot})$
be its nilradical. Then $\fn$ is an ideal of the post-Lie algebra.
\end{proposition}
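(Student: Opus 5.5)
The plan is to check the three conditions of Definition~\ref{def:ideal-postlie} for $\ii=\fn$. The first condition, $[\g,\fn]_{\cdot}\subseteq\fn$, holds because the nilradical is an ideal of $(\g,[\, ,\,]_{\cdot})$. For the other two conditions I will prove the stronger statement
\[
\g\triangleright\g\subseteq\fn .
\]
This gives $\g\triangleright\fn\subseteq\fn$ and $\fn\triangleright\g\subseteq\fn$ at once. By the second post-Lie identity each $L_x$ is a derivation of $(\g,[\, ,\,]_{\cdot})$. So it suffices to prove the following classical fact: if $\g$ is a finite-dimensional real solvable Lie algebra and $D\in\Der(\g)$, then $D(\g)\subseteq\nrad(\g)$.

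To prove this fact, fix $D$ and form the semidirect product
\[
\mathfrak h:=\mathbb R D\ltimes\g, \qquad [D,y]:=D(y).
\]
Then $\g$ is an ideal of $\mathfrak h$ and $\mathfrak h/\g$ is one-dimensional. Hence $\mathfrak h$ is solvable.

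Next I use the standard consequence of Lie's theorem in characteristic zero: for a solvable Lie algebra $\mathfrak h$, the derived algebra $[\mathfrak h,\mathfrak h]$ is a nilpotent ideal. To see this, complexify and triangularize $\mathrm{ad}$; then every element of $[\mathfrak h,\mathfrak h]$ acts by a strictly upper triangular, hence nilpotent, operator, and Engel's theorem applies. Since
\[
D(\g)=[D,\g]\subseteq[\mathfrak h,\mathfrak h]\cap\g ,
\]
it remains to see that $\mathfrak m:=[\mathfrak h,\mathfrak h]\cap\g$ lies in $\fn$. The subspace $\mathfrak m$ is an ideal of $\g$, because both $[\mathfrak h,\mathfrak h]$ and $\g$ are ideals of $\mathfrak h$. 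It is also nilpotent, since it is a subalgebra of the nilpotent algebra $[\mathfrak h,\mathfrak h]$. Hence $\mathfrak m$ is a nilpotent ideal of $\g$, and so $\mathfrak m\subseteq\fn$ by maximality of the nilradical. Thus $D(\g)\subseteq\fn$.

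Applying this with $D=L_x$ for each $x\in\g$ yields $\g\triangleright\g\subseteq\fn$, which finishes the argument. Remark~\ref{rem:circ-consequence} then shows that $\fn$ is also an ideal for $[\, ,\,]_{\circ}$.

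The same inclusion $\g\triangleright\g\subseteq\fn$, together with $[\g,\g]_{\cdot}\subseteq\fn$ (a consequence of the fact above applied to inner derivations), should also give that the quotient post-Lie algebra $\g/\fn$ is trivial. This is what Proposition~\ref{prop:trivial-quotient-closed-ideal} needs later.

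The main obstacle is the step $[\mathfrak h,\mathfrak h]\subseteq\nrad(\mathfrak h)$ over $\mathbb R$. Lie's theorem requires an algebraically closed field, so one must pass to $\mathfrak h_{\mathbb C}$ and check that nilpotency of $\mathrm{ad}$-operators descends to $\mathfrak h$. Since nilpotency of a real operator is unchanged by complexification, I expect this to be routine. Alternatively, I could simply cite the classical theorem (Jacobson; Bourbaki, Lie I, \S5) that every derivation maps the radical into the nilradical.
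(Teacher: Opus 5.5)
Your proof is correct and follows the same route as the paper. Both arguments reduce to $\g\triangleright\g\subseteq\fn$, using that each $L_x$ is a derivation of $(\g,[\, ,\,]_{\cdot})$ and that derivations of a finite-dimensional solvable Lie algebra map into the nilradical. The paper simply cites this last fact from Jacobson, while you give a valid self-contained proof of it via the solvable semidirect product $\mathfrak h=\mathbb R D\ltimes\g$, the nilpotency of $[\mathfrak h,\mathfrak h]$, and the nilpotent ideal $[\mathfrak h,\mathfrak h]\cap\g\subseteq\fn$.
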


\begin{proof}
Since $\fn$ is the nilradical of $(\g,[\, ,\,]_{\cdot})$, it is in particular
an ideal of the Lie algebra $(\g,[\, ,\,]_{\cdot})$. Hence
\[
[\g,\fn]_{\cdot}\subseteq \fn.
\]

It remains to prove
\[
\g\triangleright \fn\subseteq \fn
\qquad\text{and}\qquad
\fn\triangleright \g\subseteq \fn.
\]

For every $x\in \g$, the operator
\[
L_x:\g\to \g,\qquad L_x(y)=x\triangleright y,
\]
is a derivation of the solvable Lie algebra $(\g,[\, ,\,]_{\cdot})$.
By Jacobson's theorem, every derivation of a finite-dimensional
solvable Lie algebra has image contained in its nilradical; see
\cite[p.~52, Corollary~2]{Jacobson}.
Therefore
\[
L_x(\g)\subseteq \fn
\qquad \forall\, x\in \g,
\]
that is,
\[
\g\triangleright \g\subseteq \fn.
\]
In particular,
\[
\g\triangleright \fn\subseteq \fn,
\qquad
\fn\triangleright \g\subseteq \fn.
\]
Thus $\fn$ is an ideal of the post-Lie algebra.
\end{proof}

As an immediate consequence, simplicity forces a strong restriction on the
additive Lie algebra.

\begin{corollary}\label{cor:simple-solvable-additive}
Let
$(\g,[\, ,\,]_{\cdot},[\, ,\,]_{\circ},\triangleright)$
be a simple finite-dimensional real post-Lie algebra. If the additive Lie
algebra
$(\g,[\, ,\,]_{\cdot})$
is solvable, then it is nilpotent.
\end{corollary}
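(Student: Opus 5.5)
The plan is to apply Proposition~\ref{prop:nilrad-postlie} directly and then use simplicity of the post-Lie algebra to pin down the nilradical. Let $\fn:=\nrad(\g,[\, ,\,]_{\cdot})$. By Proposition~\ref{prop:nilrad-postlie}, $\fn$ is an ideal of the post-Lie algebra $(\g,[\, ,\,]_{\cdot},[\, ,\,]_{\circ},\triangleright)$. Simplicity then leaves only two possibilities: $\fn=0$ or $\fn=\g$.

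\emph{Case $\fn=\g$.} Here $(\g,[\, ,\,]_{\cdot})$ is its own nilradical, so it is nilpotent, as claimed.

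\emph{Case $\fn=0$.} I would rule this out using the fact that a nonzero solvable Lie algebra has a nonzero nilradical. Indeed, the last nonzero term of the derived series is an abelian ideal, and every abelian ideal is contained in the nilradical. So $\fn=0$ forces $\g=0$. If the zero algebra is admitted as simple, its additive Lie algebra is trivially nilpotent, so the conclusion holds in this degenerate case as well.

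There is no real obstacle here. The only point needing care is the observation that the nilradical of a nonzero solvable Lie algebra is nonzero, which is standard. Together with Proposition~\ref{prop:nilrad-postlie}, this completes the argument.
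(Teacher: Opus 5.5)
Your proposal is correct and follows the paper's proof: Proposition~\ref{prop:nilrad-postlie} makes $\fn$ a post-Lie ideal, and simplicity together with $\fn\neq 0$ for nonzero solvable $\g$ forces $\fn=\g$. You additionally justify $\fn\neq 0$ via the last nonzero term of the derived series and treat the degenerate case $\g=0$, which the paper leaves implicit.
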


\begin{proof}
Let
$\fn:=\nrad(\g,[\, ,\,]_{\cdot})$.
By Proposition~\ref{prop:nilrad-postlie}, $\fn$ is an ideal of the post-Lie
algebra. Since the post-Lie algebra is simple, either
\[
\fn=0
\qquad\text{or}\qquad
\fn=\g.
\]
If $(\g,[\, ,\,]_{\cdot})$ is solvable and nonzero, then its nilradical is
nonzero. Hence $\fn\neq 0$, and simplicity forces $\fn=\g$. Therefore
$(\g,[\, ,\,]_{\cdot})$ is nilpotent.
\end{proof}

Passing from the infinitesimal setting back to connected Lie skew braces, we
obtain the following global consequence.

\begin{corollary}\label{cor:simple-brace-solvable-additive}
Let $(G,\cdot,\circ)$ be a simply connected simple Lie skew brace, and let
$(\g,[\, ,\,]_{\cdot},[\, ,\,]_{\circ},\triangleright)$
be its associated post-Lie algebra. If the additive Lie group $(G,\cdot)$ is
solvable, then it is nilpotent.
\end{corollary}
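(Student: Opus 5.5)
We argue by contraposition, using the nilradical of the additive Lie algebra and Proposition~\ref{prop:trivial-quotient-closed-ideal}. Suppose $(G,\cdot,\circ)$ is simply connected and simple, and $(G,\cdot)$ is solvable. Then $(\g,[\, ,\,]_{\cdot})$ is solvable, and $\fn:=\nrad(\g,[\, ,\,]_{\cdot})$ is a post-Lie ideal by Proposition~\ref{prop:nilrad-postlie}. If $\g=0$ there is nothing to prove, so assume $\g\neq 0$; then $\fn\neq 0$, since a nonzero solvable Lie algebra has nonzero nilradical. It suffices to show $\fn=\g$, because then $(\g,[\, ,\,]_{\cdot})$ is nilpotent and hence so is the connected group $(G,\cdot)$. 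We therefore assume $\fn\neq\g$ and aim for a contradiction.

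Next we check that the quotient post-Lie algebra $\g/\fn$ is trivial, so that Proposition~\ref{prop:trivial-quotient-closed-ideal} applies. The induced product vanishes because $\g\triangleright\g\subseteq\fn$, which was shown in the proof of Proposition~\ref{prop:nilrad-postlie} via Jacobson's theorem. For the additive bracket, the classical fact $[\g,\g]_{\cdot}\subseteq\fn$ for solvable $\g$ gives that $\g/\fn$ is abelian. Then the first post-Lie identity yields
\[
[x,y]_{\circ}=[x,y]_{\cdot}+x\triangleright y-y\triangleright x\in\fn ,
\]
so the induced $\circ$-bracket also vanishes. Hence $\g/\fn$ is trivial.

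Proposition~\ref{prop:trivial-quotient-closed-ideal} now produces a connected closed ideal $I$ of $(G,\cdot,\circ)$ with $\Lie(I)=\fn$. Since $0\neq\fn\neq\g$, this ideal is nontrivial and proper, contradicting simplicity. Therefore $\fn=\g$, and $(G,\cdot)$ is nilpotent.

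The only step that needs care is the triviality of the quotient, specifically that both brackets descend to zero. Everything else is direct citation of the results above.
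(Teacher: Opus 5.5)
Your proof is correct and follows the paper's argument step for step. You show that the nilradical is a post-Lie ideal, deduce that $\g/\fn$ is trivial from $[\g,\g]_{\cdot}\subseteq\fn$, $\g\triangleright\g\subseteq\fn$ and the first post-Lie identity, and then invoke Proposition~\ref{prop:trivial-quotient-closed-ideal} to contradict simplicity. You are also slightly more explicit than the paper in noting that $\fn\neq 0$ when $\g\neq 0$, which is what makes the resulting ideal nontrivial.
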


\begin{proof}
Assume that $(G,\cdot)$ is solvable. Then the additive Lie algebra
$(\g,[\, ,\,]_{\cdot})$
is solvable. Let
\[
\fn:=\nrad(\g,[\, ,\,]_{\cdot}).
\]
By Proposition~\ref{prop:nilrad-postlie}, $\fn$ is an ideal of the associated
post-Lie algebra. Since $(\g,[\, ,\,]_{\cdot})$ is solvable, one has
$[\g,\g]_{\cdot}\subseteq \fn$.
Moreover, the proof of Proposition~\ref{prop:nilrad-postlie} gives
$\g\triangleright \g\subseteq \fn$.
Hence, using
\[
[x,y]_{\circ}=[x,y]_{\cdot}+x\triangleright y-y\triangleright x,
\]
we obtain
$[\g,\g]_{\circ}\subseteq \fn$.
Therefore the quotient post-Lie algebra
$\g/\fn$
is trivial.

If $\fn\neq \g$, then Proposition~\ref{prop:trivial-quotient-closed-ideal}
yields a nonzero proper connected closed ideal of $(G,\cdot,\circ)$,
contradicting simplicity. Hence $\fn=\g$. Therefore
$(\g,[\, ,\,]_{\cdot})$ is nilpotent, and so is the connected Lie group
$(G,\cdot)$.
\end{proof}

The previous corollary shows that, under simplicity, solvability of the
additive Lie group already forces nilpotency. This reduces the proof of the
main theorem to a nilpotent--nilpotent situation at the infinitesimal level,
which we now analyze.

\subsection{The nilpotent--nilpotent case}\label{sec:nil-nil}

We now assume that both Lie algebras underlying the post-Lie structure are
nilpotent. In this setting, results on simply transitive NIL-affine actions
imply a triangular form for the left multiplication operators. This in turn
produces a proper post-Lie ideal with trivial quotient.

\begin{lemma}\label{lem:triangular-postlie}
Let
$(\g,[\, ,\,]_{\cdot},[\, ,\,]_{\circ},\triangleright)$
be an integrable finite-dimensional real post-Lie algebra such that both Lie
algebras
\[
(\g,[\, ,\,]_{\cdot})
\qquad\text{and}\qquad
(\g,[\, ,\,]_{\circ})
\]
are nilpotent. Then there exists a basis
$A_1,\dots,A_n$
of $\g$ such that, for every $i$,
$\g_i:=\langle A_i,\dots,A_n\rangle$
is an ideal of $(\g,[\, ,\,]_{\cdot})$, and
\[
L_x(\g_i)\subseteq \g_{i+1}
\qquad \forall\, x\in \g,
\]
where $\g_{n+1}=0$.
\end{lemma}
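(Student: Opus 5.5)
The strategy is to build the flag from the bottom up, one central vector at a time, by induction on $\dim\g$. The argument has two inputs:
\begin{enumerate}[label=\textup{(\alph*)}]
\item every operator $L_x$ is nilpotent;
\item the family $\{L_x\}_{x\in\g}$ is closed under commutators.
\end{enumerate}

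Input (b) is formal. The third post-Lie identity says $[L_x,L_y]=L_{[x,y]_{\circ}}$, so $L:(\g,[\, ,\,]_{\circ})\to\Der(\g,[\, ,\,]_{\cdot})$ is a Lie algebra homomorphism. Hence $\mathfrak l:=L(\g)$ is a Lie subalgebra of $\Der(\g,[\, ,\,]_{\cdot})$, and it is nilpotent.

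Input (a) is where integrability enters, and I expect it to be the main obstacle. Integrability means the post-Lie algebra comes from a connected Lie skew brace. Passing to the universal cover if needed, $(G,\circ)$ acts simply transitively by NIL-affine maps $h\mapsto g\cdot\lambda_g(h)$ on the nilpotent group $(G,\cdot)$, and $(G,\circ)$ is itself nilpotent. I would invoke the known structure theorem for simply transitive NIL-affine actions of nilpotent Lie groups (Burde--Dekimpe and coauthors). It asserts that the linear parts $\lambda_g$ are unipotent automorphisms of $(G,\cdot)$. Differentiating $\lambda_{g*e}$ along one-parameter subgroups of $(G,\circ)$, and using $\lambda_{*e}(x)=L_x$, then shows that each $L_x$ is nilpotent. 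If that theorem is only available in the form \emph{the linear parts act unipotently on $\g$}, that is exactly what is needed. Without it, one would have to argue via the completeness of the associated affine structure; I would not attempt that here.

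With (a) and (b), I would prove the following stronger claim by induction on $\dim\g$. \emph{Let $\g$ be a nilpotent Lie algebra and $\mathfrak l\subseteq\Der(\g)$ a Lie subalgebra consisting of nilpotent derivations. Then there is a basis $A_1,\dots,A_n$ such that each $\g_i=\langle A_i,\dots,A_n\rangle$ is an ideal and $D(\g_i)\subseteq\g_{i+1}$ for all $D\in\mathfrak l$.} Working with this claim avoids needing integrability of quotients.

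The induction step runs as follows. The center $\mathfrak z$ of $(\g,[\, ,\,]_{\cdot})$ is nonzero and is preserved by every derivation. So $\mathfrak l$ restricts to a Lie algebra of nilpotent operators on $\mathfrak z$. By Engel's theorem there is $0\neq A_n\in\mathfrak z$ with $D(A_n)=0$ for all $D\in\mathfrak l$. Then $\langle A_n\rangle$ is a central ideal annihilated by $\mathfrak l$. Hence $\mathfrak l$ induces a Lie algebra of nilpotent derivations on the nilpotent Lie algebra $\g/\langle A_n\rangle$. The induction hypothesis gives a flag there. Lifting its basis to $A_1,\dots,A_{n-1}$ in $\g$ and appending $A_n$ yields subspaces $\g_i$ that are preimages of ideals, hence ideals. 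They satisfy $D(\g_i)\subseteq\g_{i+1}$ because $D(\g_{n})=0$ and the quotient condition lifts modulo $\langle A_n\rangle\subseteq\g_{i+1}$.

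Applying the claim with $\mathfrak l=L(\g)$ gives the lemma.
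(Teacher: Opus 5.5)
Your proof is correct and follows essentially the same route as the paper. In both, the only non-formal input is the Burde--Dekimpe(--Deschamps) theory of simply transitive NIL-affine actions of nilpotent groups, which forces the linear parts $\lambda_g$ to be unipotent. The paper takes the triangular flag directly from the proof of Burde--Dekimpe--Deschamps, Th.~3.1. You extract only the nilpotency of each $L_x$ and build the flag yourself by the Engel-type induction on central vectors annihilated by $L(\g)$; that induction is valid as written and makes explicit the step the paper leaves inside the citation.
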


\begin{proof}
Choose a connected simply connected Lie skew brace
$(G,\cdot,\circ)$
integrating the given post-Lie algebra. Since both Lie algebras are nilpotent,
the Lie groups $(G,\cdot)$ and $(G,\circ)$ are nilpotent as well. Hence the
associated action of $(G,\circ)$ on $(G,\cdot)$ is a simply transitive
NIL-affine action.
By \cite[Prop.~2.11, Prop.~2.12, Th.~2.15]{Burde2012affineaction}, the given
post-Lie structure corresponds to a complete NIL-affine structure
\[
d\rho:\bigl(\g,[\, ,\,]_{\circ}\bigr)\to
\bigl(\g,[\, ,\,]_{\cdot}\bigr)\rtimes \Der(\g,[\, ,\,]_{\cdot}),
\qquad
x\mapsto (x,L_x).
\]
By the proof of \cite[Th.~3.1]{BurdeDekimpeDeschamps}, there exists a basis
$A_1,\dots,A_n$
of $\g$ such that, for every $i$,
$\g_i:=\langle A_i,\dots,A_n\rangle$
is an ideal of $(\g,[\, ,\,]_{\cdot})$, and
\[
L_x(\g_i)\subseteq \g_{i+1}
\qquad \forall\, x\in \g.
\]
\end{proof}

The triangular form above immediately yields the desired quotient.

\begin{proposition}\label{prop:nil-nil}
Let
$(\g,[\, ,\,]_{\cdot},[\, ,\,]_{\circ},\triangleright)$
be an integrable finite-dimensional real post-Lie algebra such that both Lie
algebras
\[
(\g,[\, ,\,]_{\cdot})
\qquad\text{and}\qquad
(\g,[\, ,\,]_{\circ})
\]
are nilpotent. Assume that $\dim\g>1$. Then there exists a nonzero proper
ideal
$\ii\subsetneq \g$
of the post-Lie algebra such that the quotient post-Lie algebra
$\g/\ii$
is trivial.
\end{proposition}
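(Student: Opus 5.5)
We apply Lemma~\ref{lem:triangular-postlie} and take the second step of the resulting flag as the ideal. The lemma gives a basis $A_1,\dots,A_n$ with $n=\dim\g>1$ such that each $\g_i=\langle A_i,\dots,A_n\rangle$ is an ideal of $(\g,[\, ,\,]_{\cdot})$ and $L_x(\g_i)\subseteq\g_{i+1}$ for all $x\in\g$. Set
\[
\ii:=\g_2=\langle A_2,\dots,A_n\rangle .
\]
It has codimension one, so it is proper. Since $n\geq 2$, it contains $A_n\neq 0$, so it is nonzero.

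First we check that $\ii$ is a post-Lie ideal in the sense of Definition~\ref{def:ideal-postlie}:
\begin{itemize}
\item $[\g,\ii]_{\cdot}\subseteq\ii$ holds because $\g_2$ is an ideal of $(\g,[\, ,\,]_{\cdot})$.
\item $\g\triangleright\ii\subseteq\g_3\subseteq\ii$ follows from the triangular property with $i=2$.
\item $\ii\triangleright\g\subseteq\ii$ follows from the triangular property with $i=1$: since $\g_1=\g$, we get $x\triangleright y=L_x(y)\in\g_2$ for all $x,y$. In fact this gives the stronger inclusion $\g\triangleright\g\subseteq\ii$.
\end{itemize}

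Next we check that the quotient $\g/\ii$ is trivial. The quotient is one-dimensional, so its additive bracket vanishes automatically. Equivalently, $[\g,\g]_{\cdot}\subseteq\ii$, since the $\cdot$-bracket of $A_1$ with itself is zero and all other brackets involve an element of the ideal $\g_2$. The induced product $\triangleright$ on the quotient vanishes because $\g\triangleright\g\subseteq\ii$. Finally, the identity
\[
[x,y]_{\circ}=[x,y]_{\cdot}+x\triangleright y-y\triangleright x
\]
shows that $[\g,\g]_{\circ}\subseteq\ii$. Hence all three structures on $\g/\ii$ vanish, as Remark~\ref{rem:circ-consequence} also suggests.

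All of the real content lies in Lemma~\ref{lem:triangular-postlie}, which we are allowed to assume. Given that lemma, the only point needing care is the observation that $L_x(\g_1)\subseteq\g_2$ holds for $i=1$; this is exactly what forces the whole image of $\triangleright$ into $\ii$ and makes the quotient trivial rather than merely abelian.
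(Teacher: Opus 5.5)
Your proof is correct and follows the paper's argument essentially verbatim. You take $\ii=\g_2$ from the triangular flag of Lemma~\ref{lem:triangular-postlie}, use the case $i=1$ to get $\g\triangleright\g\subseteq\ii$, and conclude that $\ii$ is a post-Lie ideal with trivial one-dimensional quotient; the only cosmetic difference is that you derive $[\g,\g]_{\circ}\subseteq\ii$ from the post-Lie identity, whereas the paper simply notes that both quotient brackets vanish because $\g/\ii$ is one-dimensional.
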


\begin{proof}
By Lemma~\ref{lem:triangular-postlie}, choose a basis
$A_1,\dots,A_n$
of $\g$ such that, for every $i$,
$\g_i:=\langle A_i,\dots,A_n\rangle$
is an ideal of $(\g,[\, ,\,]_{\cdot})$, and
\[
L_x(\g_i)\subseteq \g_{i+1},
\qquad \forall\, x\in \g.
\]
Set
$\ii:=\g_2$.
Since $\dim\g>1$, the subspace $\ii$ has codimension one, hence
\[
0\neq \ii\neq \g.
\]

Moreover, $\ii$ is an ideal of $(\g,[\, ,\,]_{\cdot})$, and taking $i=1$ in
the inclusion above gives
\[
L_x(\g)\subseteq \g_2=\ii,
\qquad \forall\, x\in \g,
\]
that is,
\[
\g\triangleright \g\subseteq \ii.
\]
Therefore
\[
\g\triangleright \ii\subseteq \ii
\qquad\text{and}\qquad
\ii\triangleright \g\subseteq \ii,
\]
so $\ii$ is an ideal of the post-Lie algebra.
Finally, since $\ii$ has codimension one, the quotient $\g/\ii$ is
one-dimensional. Hence both quotient Lie algebras
\[
(\g/\ii,[\, ,\,]_{\cdot})
\qquad\text{and}\qquad
(\g/\ii,[\, ,\,]_{\circ})
\]
are abelian. Together with
$\g\triangleright \g\subseteq \ii$,
this shows that the induced post-Lie product on $\g/\ii$ vanishes. Thus the
quotient post-Lie algebra $\g/\ii$ is trivial.
\end{proof}

\section{Proof of the main theorem}\label{sec:proof-main}

We now combine the previous ingredients.

\begin{proof}[Proof of Theorem~\ref{thm:main}]
Let
$(\g,[\, ,\,]_{\cdot},[\, ,\,]_{\circ},\triangleright)$
be the post-Lie algebra associated with $(G,\cdot,\circ)$.
Since $(G,\circ)$ is nilpotent and $G$ is connected, it follows from
\cite[Th.~1.1, property~(R1)]{DameleLoi2026} that the additive Lie group
$(G,\cdot)$ is solvable. Assume that $(G,\cdot,\circ)$ is simple. Then
Corollary~\ref{cor:simple-brace-solvable-additive} shows that the additive Lie
group $(G,\cdot)$ is nilpotent as well. Hence both Lie algebras
\[
(\g,[\, ,\,]_{\cdot})
\qquad\text{and}\qquad
(\g,[\, ,\,]_{\circ})
\]
are nilpotent.
Assume now that $\dim G>1$. Then $\dim\g>1$, so
Proposition~\ref{prop:nil-nil} yields a nonzero proper ideal
$\ii\subsetneq \g$
of the post-Lie algebra such that the quotient post-Lie algebra $\g/\ii$ is
trivial. By Proposition~\ref{prop:trivial-quotient-closed-ideal}, this gives a
nonzero proper connected closed ideal of $(G,\cdot,\circ)$, contradicting
simplicity. Therefore one must have
$\dim G=1$.
Since both underlying Lie groups are connected and one-dimensional, they are
abelian. Therefore Lemma~\ref{lem:1dim-trivial} applies and shows that
$(G,\cdot,\circ)$ is trivial. This completes the proof.
\end{proof}

\begin{corollary}\label{cor:main-equivalent}
Let $(G,\cdot,\circ)$ be a simply connected Lie skew brace such that the
multiplicative Lie group $(G,\circ)$ is nilpotent. If
$\dim G>1$,
then $(G,\cdot,\circ)$ is not simple.
\end{corollary}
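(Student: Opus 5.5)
The corollary is the contrapositive of Theorem~\ref{thm:main}, so the plan is to reduce it to that theorem. Suppose, for contradiction, that $(G,\cdot,\circ)$ is a simply connected Lie skew brace with nilpotent multiplicative group $(G,\circ)$, that $\dim G>1$, and that it is simple. These are exactly the hypotheses of Theorem~\ref{thm:main}. The theorem then forces both underlying Lie groups to be one-dimensional, which contradicts $\dim G>1$. Hence $(G,\cdot,\circ)$ is not simple.

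For a more explicit argument that avoids invoking the full theorem, I would follow its proof directly and produce the ideal. The steps are as follows.

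\begin{enumerate}[label=\textup{(\arabic*)}]
\item By \cite[Th.~1.1, (R1)]{DameleLoi2026}, $(G,\cdot)$ is solvable. Let $\fn$ be the nilradical of $(\g,[\, ,\,]_{\cdot})$.
\item Suppose $\fn\neq\g$. The proof of Corollary~\ref{cor:simple-brace-solvable-additive} shows that $\g/\fn$ is a trivial post-Lie quotient. Since $\g$ is solvable and nonzero, $\fn\neq 0$. Proposition~\ref{prop:trivial-quotient-closed-ideal} then gives a nonzero proper connected closed ideal.
\item Suppose instead $\fn=\g$. Then both Lie algebras are nilpotent, and the post-Lie algebra is integrable because it comes from $G$. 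Since $\dim\g>1$, Proposition~\ref{prop:nil-nil} supplies a nonzero proper post-Lie ideal $\ii$ with trivial quotient, and Proposition~\ref{prop:trivial-quotient-closed-ideal} again gives a nonzero proper connected closed ideal $I$ with $\Lie(I)=\ii$.
\end{enumerate}

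Neither step is a real obstacle, since everything needed was proved earlier. The only point to check is that the ideal produced is genuinely nontrivial and proper at the group level. This follows from $\Lie(I)=\ii$ with $0\neq\ii\neq\g$, because $I$ is connected.
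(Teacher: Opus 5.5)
Your proposal is correct and matches the paper, which also obtains the corollary directly from Theorem~\ref{thm:main}, as its contrapositive. Your explicit version, with the case split $\fn\neq\g$ versus $\fn=\g$ and Propositions~\ref{prop:trivial-quotient-closed-ideal} and~\ref{prop:nil-nil}, retraces the proof of that theorem and exhibits the ideal directly rather than arguing by contradiction.
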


\begin{proof}
This is immediate from Theorem~\ref{thm:main}.
\end{proof}

\begin{remark}\rm
The one-dimensional case in Theorem~\ref{thm:main} is necessary. Indeed, every
connected one-dimensional abelian Lie group gives rise to a trivial simple Lie
skew brace. Under the simply connected assumption, the exceptional case is necessarily
isomorphic to $(\mathbb R,+,+)$.
\end{remark}

\end{document}